\newtheorem{prelem}{{\bf Theorem}}
\newtheorem{theorem}{Theorem}
\newtheorem{corollary}[theorem]{Corollary}
\newtheorem{lemma}[theorem]{Lemma}
\newtheorem{example}[theorem]{Example}
\newtheorem{proposition}[theorem]{Proposition}
\theoremstyle{definition}
\newtheorem{definition}[theorem]{Definition}
\theoremstyle{remark}
\newtheorem{rem}[theorem]{\bf Remark}
\let\Bbb=\mathbb
\let\phi=\varphi
\def\NZQ{\Bbb}
\def\ZZ{{\NZQ Z}}
\def\QQ{{\NZQ Q}}
\def\RR{{\NZQ R}}
\def\NN{{\NZQ N}}
\def\relint{\operatorname{relint}}
\let\oldbigwedge\bigwedge
\def\BIGwedge{{\textstyle\oldbigwedge}}
\def\medwedge{{\scriptstyle\oldbigwedge}}
\def\bigwedge{\mathchoice{\BIGwedge}{\BIGwedge}{\medwedge}{}}
\def\dim{\operatorname{dim}}
\let\epsilon=\varepsilon
\title{Cohen-Macaulay types of certain edge subrings of bipartite graphs and (generalized) Fuss-Catalan numbers}
\author{Alin \c{S}tefan}
\date{}
\begin{document}
\maketitle

\begin{abstract}

We give an example of two non isomorphic coordinate rings of a special kind of convex polyominoes whose Cohen-Macaulay types are generalized Fuss-Catalan numbers. We further provide a determinantal formula for these numbers.
\end{abstract}

\noindent {\bf Keywords:} Combinatorial identities, equations of a
cone, canonical module, toric rings\\
{\bf MSC 2010}: 05A10, 05A19, 13A02, 13H10, 15A39

\section{Introduction}
Polyominoes are two-dimensional objects obtained by joining unitary squares edge by edge. They appear in puzzles, designs, and tiling problems (see $\cite{G}$ for more details).

Let $\mathcal{P}$ be a polyomino and $K$ be a field. We denote by $I_{\mathcal{P}}$, the polyomino ideal attached to $\mathcal{P}$, in a suitable polynomial ring over $K$. The associated quotient ring is denoted by $K[\mathcal{P}]$.
In $\cite{Q}$, when $\mathcal{P}$ is convex, Qureshi proved that $K[\mathcal{P}]$ is a normal domain. She proved that $I_{\mathcal{P}}$ defines the edge subring $R=K[x_{i}y_{j} \ | \ (i,j)\in V(\mathcal{P})]$, 
where $V(\mathcal{P})$ is the set of all corners belonging to cells of $\mathcal{P}$.
 In this case, $R$ is Cohen-Macaulay by a celebrated theorem of Hochster. Danilov and Stanley described the canonical module $\omega_{R}$ of $R$ via the polyhedral cone generated by the set of exponents $\{\log(x_{i}y_{j}) \ | \ (i,j)\in V(\mathcal{P})\}.$
The Cohen-Macaulay type of $R$ is defined by $r(R)=\mu(R)=\dim_{K} \ \omega_{R} \otimes K.$
The canonical module is a fundamental object in commutative algebra that contains information about the last syzygy module of $K[\mathcal{P}]$ (see $\cite{BH}$,  pp. 136-146).
In $\cite{VV}$ the authors show how to compute a generating set for the canonical module of the edge ring of bipartite graph using
combinatorial optimization tehniques in terms of the incidence matrix of graph and the vertices of a certain blocking polyhedron, but an explicit formula for the Cohen-Macaulay type is not given. 

Let $\mathbb{N}$ denote the set of nonnegative integers, $[m]$ the set $\{1, 2, \ldots, m\}$ when $m\in \mathbb{N}$ and $\ZZ_>$ the set of positive integers. 
For $p \geq 1$ let $u_{1}, \ldots, u_{p}$, $r_{1}, \ldots, r_{p} \in \ZZ_>$ and for $k\in [p]$ we write \[A_{k}=1 + \sum_{i=1}^{k}u_{i}, \ B_{k}=1 + \sum_{i=1}^{k}r_{i}.\]

Let $\mathcal{P}=\mathcal{P}
  \left( {\begin{array}{cccc}
   u_{1} & u_{2} & . . . & u_{p} \\
   r_{1} & r_{2} & . . . & r_{p} \\
  \end{array} } \right)
$ 
be the polyomino as shown in Fig. 1. This has the first $r_{1}$ columns of height $A_{1}-1$, next $r_{2}$ columns of height $A_{2}-1$ and so on. The coordinate ring $K[\mathcal{P}]$ is 
a normal Cohen-Macaulay domain of dimension $|V(\mathcal{P})|$ - $|\mathcal{P}|$=$A_{p}+B_{p}-1$. Also, $K[\mathcal{P}]$ is isomorphic to the toric ring $R=K[x_{i}y_{j} \ | \ (i,j)\in V(\mathcal{P})]$.
We present the irreducible representation of the cone generated by \[B=\{\log(x_{i}y_{j}) \ | \ (i,j)\in V(\mathcal{P})\}\subset \NN^{B_{p}+A_{p}},\] 
the set of the exponents of the generators of the toric ring $R$.

\unitlength 0.67mm 
\linethickness{0.4pt}
\ifx\plotpoint\undefined\newsavebox{\plotpoint}\fi 

\begin{picture}(20,120)(-30,0)
\put(40,40){\framebox(4,4)[lc]{}}
\put(44,40){\framebox(4,4)[lc]{}}
\put(48,40){\framebox(4,4)[lc]{}}
\put(52,40){\framebox(4,4)[lc]{}}
\put(56,40){\framebox(4,4)[lc]{}}
\put(60,40){\framebox(4,4)[lc]{}}
\put(64,40){\framebox(4,4)[lc]{}}
\put(68,40){\framebox(4,4)[lc]{}}
\put(72,40){\framebox(4,4)[lc]{}}
\put(76,40){\framebox(4,4)[lc]{}}
\put(80,40){\framebox(4,4)[lc]{}}
\put(84,40){\framebox(4,4)[lc]{}}
\put(88,40){\framebox(4,4)[lc]{}}
\put(92,40){\framebox(4,4)[lc]{}}
\put(96,40){\framebox(4,4)[lc]{}}
\put(100,40){\framebox(4,4)[lc]{}}
\put(104,40){\framebox(4,4)[lc]{}}

\put(19,40){$M_{0}(1,1)$}

\put(40,44){\framebox(4,4)[lc]{}}
\put(44,44){\framebox(4,4)[lc]{}}
\put(48,44){\framebox(4,4)[lc]{}}
\put(52,44){\framebox(4,4)[lc]{}}
\put(56,44){\framebox(4,4)[lc]{}}
\put(60,44){\framebox(4,4)[lc]{}}
\put(64,44){\framebox(4,4)[lc]{}}
\put(68,44){\framebox(4,4)[lc]{}}
\put(72,44){\framebox(4,4)[lc]{}}
\put(76,44){\framebox(4,4)[lc]{}}
\put(80,44){\framebox(4,4)[lc]{}}
\put(84,44){\framebox(4,4)[lc]{}}
\put(88,44){\framebox(4,4)[lc]{}}
\put(92,44){\framebox(4,4)[lc]{}}
\put(96,44){\framebox(4,4)[lc]{}}
\put(100,44){\framebox(4,4)[lc]{}}
\put(104,44){\framebox(4,4)[lc]{}}

\put(52,48){\framebox(4,4)[lc]{}}
\put(56,48){\framebox(4,4)[lc]{}}
\put(60,48){\framebox(4,4)[lc]{}}
\put(64,48){\framebox(4,4)[lc]{}}
\put(68,48){\framebox(4,4)[lc]{}}
\put(72,48){\framebox(4,4)[lc]{}}
\put(76,48){\framebox(4,4)[lc]{}}
\put(80,48){\framebox(4,4)[lc]{}}
\put(84,48){\framebox(4,4)[lc]{}}
\put(88,48){\framebox(4,4)[lc]{}}
\put(92,48){\framebox(4,4)[lc]{}}
\put(96,48){\framebox(4,4)[lc]{}}
\put(100,48){\framebox(4,4)[lc]{}}
\put(104,48){\framebox(4,4)[lc]{}}

\put(16,48){$M_{1}(1, A_{1})$}

\put(52,52){\framebox(4,4)[lc]{}}
\put(56,52){\framebox(4,4)[lc]{}}
\put(60,52){\framebox(4,4)[lc]{}}
\put(64,52){\framebox(4,4)[lc]{}}
\put(68,52){\framebox(4,4)[lc]{}}
\put(72,52){\framebox(4,4)[lc]{}}
\put(76,52){\framebox(4,4)[lc]{}}
\put(80,52){\framebox(4,4)[lc]{}}
\put(84,52){\framebox(4,4)[lc]{}}
\put(88,52){\framebox(4,4)[lc]{}}
\put(92,52){\framebox(4,4)[lc]{}}
\put(96,52){\framebox(4,4)[lc]{}}
\put(100,52){\framebox(4,4)[lc]{}}
\put(104,52){\framebox(4,4)[lc]{}}

\put(52,56){\framebox(4,4)[lc]{}}
\put(56,56){\framebox(4,4)[lc]{}}
\put(60,56){\framebox(4,4)[lc]{}}
\put(64,56){\framebox(4,4)[lc]{}}
\put(68,56){\framebox(4,4)[lc]{}}
\put(72,56){\framebox(4,4)[lc]{}}
\put(76,56){\framebox(4,4)[lc]{}}
\put(80,56){\framebox(4,4)[lc]{}}
\put(84,56){\framebox(4,4)[lc]{}}
\put(88,56){\framebox(4,4)[lc]{}}
\put(92,56){\framebox(4,4)[lc]{}}
\put(96,56){\framebox(4,4)[lc]{}}
\put(100,56){\framebox(4,4)[lc]{}}
\put(104,56){\framebox(4,4)[lc]{}}

\put(52,60){\framebox(4,4)[lc]{}}
\put(56,60){\framebox(4,4)[lc]{}}
\put(60,60){\framebox(4,4)[lc]{}}
\put(64,60){\framebox(4,4)[lc]{}}
\put(68,60){\framebox(4,4)[lc]{}}
\put(72,60){\framebox(4,4)[lc]{}}
\put(76,60){\framebox(4,4)[lc]{}}
\put(80,60){\framebox(4,4)[lc]{}}
\put(84,60){\framebox(4,4)[lc]{}}
\put(88,60){\framebox(4,4)[lc]{}}
\put(92,60){\framebox(4,4)[lc]{}}
\put(96,60){\framebox(4,4)[lc]{}}
\put(100,60){\framebox(4,4)[lc]{}}
\put(104,60){\framebox(4,4)[lc]{}}

\put(60,64){\framebox(4,4)[lc]{}}
\put(64,64){\framebox(4,4)[lc]{}}
\put(68,64){\framebox(4,4)[lc]{}}
\put(72,64){\framebox(4,4)[lc]{}}
\put(76,64){\framebox(4,4)[lc]{}}
\put(80,64){\framebox(4,4)[lc]{}}
\put(84,64){\framebox(4,4)[lc]{}}
\put(88,64){\framebox(4,4)[lc]{}}
\put(92,64){\framebox(4,4)[lc]{}}
\put(96,64){\framebox(4,4)[lc]{}}
\put(100,64){\framebox(4,4)[lc]{}}
\put(104,64){\framebox(4,4)[lc]{}}

\put(60,68){\framebox(4,4)[lc]{}}
\put(64,68){\framebox(4,4)[lc]{}}
\put(68,68){\framebox(4,4)[lc]{}}
\put(72,68){\framebox(4,4)[lc]{}}
\put(76,68){\framebox(4,4)[lc]{}}
\put(80,68){\framebox(4,4)[lc]{}}
\put(84,68){\framebox(4,4)[lc]{}}
\put(88,68){\framebox(4,4)[lc]{}}
\put(92,68){\framebox(4,4)[lc]{}}
\put(96,68){\framebox(4,4)[lc]{}}
\put(100,68){\framebox(4,4)[lc]{}}
\put(104,68){\framebox(4,4)[lc]{}}

\put(60,72){\framebox(4,4)[lc]{}}
\put(64,72){\framebox(4,4)[lc]{}}
\put(68,72){\framebox(4,4)[lc]{}}
\put(72,72){\framebox(4,4)[lc]{}}
\put(76,72){\framebox(4,4)[lc]{}}
\put(80,72){\framebox(4,4)[lc]{}}
\put(84,72){\framebox(4,4)[lc]{}}
\put(88,72){\framebox(4,4)[lc]{}}
\put(92,72){\framebox(4,4)[lc]{}}
\put(96,72){\framebox(4,4)[lc]{}}
\put(100,72){\framebox(4,4)[lc]{}}
\put(104,72){\framebox(4,4)[lc]{}}

\put(24,64){$M_{2}(B_{1}, A_{2})$}

\put(72,76){\framebox(4,4)[lc]{}}
\put(76,76){\framebox(4,4)[lc]{}}
\put(80,76){\framebox(4,4)[lc]{}}
\put(84,76){\framebox(4,4)[lc]{}}
\put(88,76){\framebox(4,4)[lc]{}}
\put(92,76){\framebox(4,4)[lc]{}}
\put(96,76){\framebox(4,4)[lc]{}}
\put(100,76){\framebox(4,4)[lc]{}}
\put(104,76){\framebox(4,4)[lc]{}}

\put(72,80){\framebox(4,4)[lc]{}}
\put(76,80){\framebox(4,4)[lc]{}}
\put(80,80){\framebox(4,4)[lc]{}}
\put(84,80){\framebox(4,4)[lc]{}}
\put(88,80){\framebox(4,4)[lc]{}}
\put(92,80){\framebox(4,4)[lc]{}}
\put(96,80){\framebox(4,4)[lc]{}}
\put(100,80){\framebox(4,4)[lc]{}}
\put(104,80){\framebox(4,4)[lc]{}}

\put(88,84){\framebox(4,4)[lc]{}}
\put(92,84){\framebox(4,4)[lc]{}}
\put(96,84){\framebox(4,4)[lc]{}}
\put(100,84){\framebox(4,4)[lc]{}}
\put(104,84){\framebox(4,4)[lc]{}}

\put(88,88){\framebox(4,4)[lc]{}}
\put(92,88){\framebox(4,4)[lc]{}}
\put(96,88){\framebox(4,4)[lc]{}}
\put(100,88){\framebox(4,4)[lc]{}}
\put(104,88){\framebox(4,4)[lc]{}}

\put(88,92){\framebox(4,4)[lc]{}}
\put(92,92){\framebox(4,4)[lc]{}}
\put(96,92){\framebox(4,4)[lc]{}}
\put(100,92){\framebox(4,4)[lc]{}}
\put(104,92){\framebox(4,4)[lc]{}}

\put(96,96){\framebox(4,4)[lc]{}}
\put(100,96){\framebox(4,4)[lc]{}}
\put(104,96){\framebox(4,4)[lc]{}}

\put(28,84){$M_{p-2}(B_{p-3}, A_{p-2})$}

\put(96,100){\framebox(4,4)[lc]{}}
\put(100,100){\framebox(4,4)[lc]{}}
\put(104,100){\framebox(4,4)[lc]{}}

\put(44,96){$M_{p-1}(B_{p-2}, A_{p-1})$}

\put(96,104){\framebox(4,4)[lc]{}}
\put(100,104){\framebox(4,4)[lc]{}}
\put(104,104){\framebox(4,4)[lc]{}}

\put(63,112){$M_{p}(B_{p-1}, A_{p})$}

\put(96,108){\framebox(4,4)[lc]{}}
\put(100,108){\framebox(4,4)[lc]{}}
\put(104,108){\framebox(4,4)[lc]{}}

\put(110,112){$M_{p+1}(B_{p}, A_{p})$}

\put(-38,27){Fig. 1 The polyomino 
$\mathcal{P}
  \left( {\begin{array}{cccc}
   u_{1} & u_{2} & . . . & u_{p} \\
   r_{1} & r_{2} & . . . & r_{p} \\
  \end{array} } \right)
$ }
\end{picture}\\
If $\mathcal{P}_{1}=\mathcal{P}
  \left( {\begin{array}{cccc}
   u_{1} & u_{2} & . . . & u_{p} \\
   r_{1} & r_{2} & . . . & r_{p} \\
  \end{array} } \right)
	$
	and
	$\mathcal{P}_{2}=\mathcal{P}
  \left( {\begin{array}{cccc}
   u_{1} & u_{2} & . . . & u_{p} \\
   s_{1} & s_{2} & . . . & s_{p} \\
  \end{array} } \right)
	$
	where $u_{1}=\ldots=u_{n}=n, \ r_{1}=\ldots=r_{n}=t$ and $s_{1}=\ldots=s_{n}=n-t$,  we show that two non isomorphic coordinate rings $K[\mathcal{P}_{1}]$ and $K[\mathcal{P}_{2}]$ have the same Cohen-Macaulay types 
	$r(K[\mathcal{P}_{1}])=\begin{bmatrix}  n \\ t \end{bmatrix}_{p} = \begin{bmatrix}  n \\ n-t \end{bmatrix}_{p} =r(K[\mathcal{P}_{2}])$  which are the generalized Fuss-Catalan numbers.


\section{Prelimininaries}
\subsection{Cones}

In this section we fix the notation and recall some basic results. For details we refer the reader to
 \cite{B}, \cite{BG}, \cite{BH}, \cite{MS},  \cite{V} and \cite{We}.

We denote $\QQ_+,\RR_+$ the set of nonnegative rationals and real numbers respectively.

Fix an integer $n > 0$.  If $0\neq a\in \mathbb{Q}^{n},$ then $H_{a}$ will denote the
rational hyperplane of $\mathbb{R}^{n}$ through the origin with normal vector
$a,$ that is,\[H_{a}=\{x\in \mathbb{R}^{n}\ | \ \langle x,a\rangle=0\},\] where
$\langle \ ,\ \rangle$ is the scalar product in $\mathbb{R}^{n}.$ The two closed rational linear
halfspaces bounded by $H_{a}$ are: \[H^{+}_{a}=\{x\in
\mathbb{R}^{n}\ | \ \langle x,a\rangle \geq 0\} \ {\rm{and}} \  H^{-}_{a}=H^{+}_{-a}=\{x\in
\mathbb{R}^{n}\ | \ \langle x,a\rangle \leq  0\}.\]
The two open rational linear
halfspaces bounded by $H_{a}$ are: \[H^{>}_{a}=\{x\in
\mathbb{R}^{n}\ | \ \langle x,a\rangle > 0\} \ {\rm{and}} \  H^{<}_{a}=H^{>}_{-a}=\{x\in
\mathbb{R}^{n}\ | \ \langle x,a\rangle <  0\}.\]

If $S\subset \QQ^n$, then the set
$$\mathbb{R}_{+}{S}=\{\sum_{i=1}^{r}a_{i}v_{i}\ : \ a_{i}\in
\mathbb{R}_{+}, \ v_i \in S, \ r\in\NN \}$$
is called the \emph{rational cone} generated by $S$. The \emph{dimension} of a cone is the dimension of the smallest vector subspace of $\RR^n$ which contains it.

By the theorem of Minkowski-Weyl, see \cite{BG}, \cite{H}, \cite{We}, finitely generated rational
cones can also be described as intersection of finitely many rational closed subspaces (of the form
$H^{+}_{a}$). We further restrict this presentation to the class of finitely generated rational cones, which will be simply called cones.
If a cone $C$ is presented as \[C=H^{+}_{a_{1}}\cap \ldots
\cap H^{+}_{a_{r}}\] such that no  $H^{+}_{a_{i}}$ can be
omitted, then we say that this is an \emph{irredundant representation} of $C$. If $\dim(C)=n$, then the  halfspaces
$H^{+}_{a_{1}}, \ldots , H^{+}_{a_{r}}$
in an irredundant representation of $C$
are uniquely determined and we set
\[\relint(C)=H^{>}_{a_{1}}\cap \ldots
\cap H^{>}_{a_{r}}\]
the \emph{relative interior} of $C$.
If $a_{i}=(a_{i1},\ldots,a_{in}),$  then we call
\[H_{a_{i}}(x):=a_{i1}x_{1}+\cdots + a_{in}x_{n}=0,
\]
the \emph{equations of the cone} $C.$

A hyperplane $H$ is called a supporting hyperplane of a cone $C$ if $C\cap H\neq \emptyset$ and $C$ is contained in one of the closed halfspaces determined by $H$.
If $H$ is a supporting hyperplane of $C$, then $F=C\cap H$ is called a \emph{proper face} of $C$. It is convenient to consider also the empty set and $C$ as faces, the \emph{improper faces}.
The faces of a cone are themselves cones. A face $F$ with $\dim(F)=\dim(C)-1$ is called a \emph{facet}. If $\dim \ \mathbb{R}_{+}S=n$ and $F$ is a facet defined by
the supporting hyperplane $H$, then $H$ is generated as a linear subspace by a linearly independent subset of $S$.

A cone $C$ is  \emph{pointed} if $0$ is a face of $C$. This is equivalent to say that $x\in C$ and $-x\in C$ imply $x=0$.
The faces of dimension $1$ of a pointed cone are called \emph{extreme rays}.

Let $\mathcal{A}\subset \mathbb{Z}^{n}$ be a collection of lattice points in $\mathbb{Z}^{n}$ and $R=K[\mathcal{A}]$ be the toric ring defined by $\mathcal{A}$ over field $K.$
By a celebrated theorem of Hochster, when $R$ is normal, it is Cohen-Macaulay. In this case, Danilov and Stanley $( \cite{BH},$ Theorem \ 6.3.5 and $\cite{D})$ described
the canonical module $\omega_{R}$ of $R$ via the polyhedral cone generated by $\mathcal{A},$ that is
$\omega_{R}=(\{x^{a}| \ a\in {\mathbb{N}}\mathcal{A}\cap
 \relint({\mathbb{R_{+}}}\mathcal{A})\}).$  The Cohen-Macaulay type of $R$ is defined by $r(R)=\mu(\omega_{R})=\dim_{K} \ \omega_{R} \otimes K.$

\subsection{Polyominoes}

First, we briefly recall fundamental materials and basic terminologies on polyominoes and their binomial ideals.
For further information on algebra and combinatorics on polyominoes we refer  the reader to $\cite{Q}$.

Given $a=(i,j)$ and $b=(k,l)$ belonging to $\mathbb{N}^{2}$ we write $a<b$ if $i<k$ and $j<l.$ When $a<b$, we define 
an $interval$ $[a,b]$ of $\mathbb{N}^{2}$ to be $[a,b]=\{c\in \mathbb{N}^{2} : a\leq c\leq b\}\subset \mathbb{N}^{2}.$
For an interval $[a,b]$, the $diagonal$ corners of $[a,b]$ are $a$ and $b$ and the $anti$-$diagonal$ corners of $[a,b]$ are
$c=(i,l)$ and $d=(k,j)$.

A $cell$ of $\mathbb{N}^{2}$ with the lower left corner $a\in \mathbb{N}^{2}$ is the interval $C=[a,a+(1,1)]$. Its $vertices$ 
are $a$, $a+(1,1)$, $a+(1,0)$ and $a+(0,1)$. Its $edges$ are $\{a, a+(1,0)\}$, $\{a, a+(0,1)\}$, $\{a+(1,0), a+(1,1)\}$, 
$\{a+(0,1), a+(1,1)\}$. Let $V(C)$ denote the set of vertices of $C$ and $E(C)$ the set of edges of $C$. 

Let $\mathcal{P}$ be a finite collection of cells of $\mathbb{N}^{2}$. Its $vertex \ set$ is $V(\mathcal{P})=
\bigcup_{C\in \mathcal{P}}V(C)$ and its $edge \ set$ is $E(\mathcal{P})=\bigcup_{C\in \mathcal{P}}E(C)$. We say that
two cells $C$ and $D$ are $connected$ if there exists a sequence of cells \[\mathcal{C}: C=C_{1},\ldots,C_{n}=D\] of 
$\mathcal{P}$ such that $C_{i}\cup C_{i+1}$ is an edge of $C_{i}$ for $i\in [n-1]$; if $C_{i}\neq C_{j}$ for all 
$i\neq j$, then $\mathcal{C}$ is called a $path$. We say that $\mathcal{P}$ is a  $polyomino$ if any two cells of 
$\mathcal{P}$ are connected. A polyomino $\mathcal{Q}$ is a $subpolyomino$ of $\mathcal{P}$ and denoted $\mathcal{Q}\subset \mathcal{P}$ 
if each cell belonging to $\mathcal{Q}$ belongs to $\mathcal{P}$.

Let $A$ and $B$ be cells of $\mathbb{N}^{2}$ for which $(i,j)$ is the lower left corner of $A$ and $(k,l)$ is the lower left corner of $B$. 
If $i\leq k$ and $j\leq l$, then the $cell \ interval$ of $A$ and $B$ is the set $[A,B]$ which consists of those 
cells $E$ of $\mathbb{N}^{2}$ whose lower left corner $(r,s)$ satisfies $i\leq r\leq k$ and $j\leq s\leq l$.

Let $\mathcal{P}$ be a finite collection of cells of $\mathbb{N}^{2}$. We call $\mathcal{P}$ $row \ convex$ if the horizontal cell interval $[A,B]$ is contained in $\mathcal{P}$ 
for any cells $A$ and $B$ of $\mathcal{P}$ whose lower left corners are in horizontal position. Similary one can define $column \ convex$. We call $\mathcal{P}$ $convex$ if it is row convex and column convex.

Let $\mathcal{P}$ be a finite collection of cells of $\mathbb{N}^{2}$ with $V(\mathcal{P})$ its vertex set. 
Let $S$ denote the polynomial ring over a field $K$ whose the variables are those $x_{a}$ with $a\in V(\mathcal{P})$. We say that an interval $[a,b]$ is
 $an \ interval \ of \mathcal{P}$ if $\mathcal{P}_{[a,b]}\subset \mathcal{P}$, where $\mathcal{P}_{[a,b]}$ is the interval $[a,b]$ regarded as a polyomino. For each interval $[a,b]$ of $\mathcal{P}$, 
we introduce the binomial $f_{a,b}=x_{a}x_{b}-x_{c}x_{d}$, where $c$ and $d$ are the anti-diagonals of $[a,b]$. Such a binomial $f_{a,b}$ is said to be an $inner$ 2-$minor$ of $\mathcal{P}$. 
Write $I_{\mathcal{P}}$ for the ideal generated by all inner 2-minors of $\mathcal{P}$. 
If $\mathcal{P}$ happens to be a polyomino, then $I_{\mathcal{P}}$ will  be called a $polyomino$ $ideal$. The $K$-algebra $S/I_{\mathcal{P}}$ is denoted $K[\mathcal{P}]$ and is called the coordinate ring of $\mathcal{P}$. 

The following result has been shown by Qureshi in $\cite{Q}$.

\begin{theorem}
Let $\mathcal{P}$ be a convex polyomino. Then $K[\mathcal{P}]$ is a normal Cohen-Macaulay domain of dimension $|V(\mathcal{P})|$ - $|\mathcal{P}|$.
\end{theorem}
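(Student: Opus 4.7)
The plan is to prove Qureshi's theorem by identifying $K[\mathcal{P}]$ with the toric edge ring of a bipartite graph built from $\mathcal{P}$, and then invoking Hochster's theorem. First I would set up the surjective $K$-algebra homomorphism
\[
\phi\colon S = K[x_a \mid a \in V(\mathcal{P})] \longrightarrow R = K[x_i y_j \mid (i,j) \in V(\mathcal{P})], \qquad x_{(i,j)} \mapsto x_i y_j.
\]
The inclusion $I_{\mathcal{P}} \subseteq \ker\phi$ is immediate: for each inner interval $[a,b]$ with anti-diagonal corners $c,d$, one has $\phi(x_a x_b) = \phi(x_c x_d)$ because $a + b = c + d$ coordinatewise. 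The substantive task is the reverse inclusion $\ker\phi \subseteq I_{\mathcal{P}}$.

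To obtain this reverse inclusion, I would fix a monomial order on $S$ refining a natural lexicographic reading order on $V(\mathcal{P})$ and argue that the inner $2$-minors of $\mathcal{P}$ form a Gröbner basis of $\ker\phi$. Verification proceeds by checking that every $S$-pair of two such binomials reduces to zero using only inner $2$-minors, and here both row- and column-convexity are essential: they guarantee that the anti-diagonal corners of any rectangular sub-region occurring in a reduction remain inside $V(\mathcal{P})$, so the reductions stay within $S$ and produce elements of $I_{\mathcal{P}}$. Once this is secured, $K[\mathcal{P}] \cong R$, so $K[\mathcal{P}]$ is a domain, being isomorphic to a subring of a polynomial ring.

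Cohen-Macaulayness then follows from Hochster's theorem, provided the semigroup $\NN A$ generated by $A = \{\log(x_i y_j) \mid (i,j) \in V(\mathcal{P})\}$ is normal. This is a known property of edge rings of bipartite graphs---the semigroup of incidence vectors of the edges of any bipartite graph is normal---and $R$ is, by construction, such an edge ring. For the dimension, the rank of $\ZZ A$ equals $m + n - 1$, where $m$ and $n$ are the numbers of distinct row and column indices appearing in $V(\mathcal{P})$, because the bipartite graph is connected (row-convexity forces horizontal connectivity, so the whole graph is connected). A direct combinatorial count, equivalent to Euler's formula applied to the simply connected planar region occupied by $\mathcal{P}$, then yields $m + n - 1 = |V(\mathcal{P})| - |\mathcal{P}|$.

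The main obstacle is the Gröbner basis step: verifying that every $S$-pair of inner $2$-minors reduces to zero is a case analysis that genuinely needs convexity in both directions, because otherwise the intermediate anti-diagonal corners forced by a reduction could fall outside $V(\mathcal{P})$ and the corresponding binomial would not belong to $I_{\mathcal{P}}$. Once the isomorphism $K[\mathcal{P}] \cong R$ has been secured, normality, Cohen-Macaulayness, and the dimension formula are all standard consequences of toric theory.
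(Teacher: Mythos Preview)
The paper does not give its own proof of this theorem: it is quoted verbatim as a result of Qureshi \cite{Q}, with only the surrounding text recording the map $\phi$, the identification $I_{\mathcal{P}}=\ker\phi$, and the appeal to Hochster. So there is nothing in the present paper to compare your argument against beyond that outline, and your proposal matches that outline step for step.

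Your sketch is essentially the strategy of Qureshi's original proof. Two remarks on the details you leave open. First, the Gr\"obner basis verification is indeed the crux, but be aware that in Qureshi's argument the reduction is organised slightly differently (via a structural description of the initial ideal rather than raw $S$-pair chasing); either route works, and yours is fine as a plan. Second, your dimension step asserts $m+n-1=|V(\mathcal{P})|-|\mathcal{P}|$ via ``Euler's formula'', but Euler alone gives $|E(\mathcal{P})|=|V(\mathcal{P})|+|\mathcal{P}|-1$, which is not yet the claim; you still need the convexity-specific count that the number of horizontal (resp.\ vertical) edge-segments equals $|V(\mathcal{P})|-n$ (resp.\ $|V(\mathcal{P})|-m$), or an equivalent induction on cells. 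This is routine but should be stated, since it is the only place where the formula $|V(\mathcal{P})|-|\mathcal{P}|$ is tied to the rank $m+n-1$ of the lattice $\ZZ A$.
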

We may assume that $V(\mathcal{P})\subset [(1,1),(m,n)]$. To $\mathcal{P}$ we attach the toric ring \[R=K[x_{i}y_{j} \ | \ (i,j)\in V(\mathcal{P})]\subset K[x_{1},\ldots, x_{m}, y_{1},\ldots, y_{n}].\]
For the sake of convenience, we denote for $a=(i,j)\in V(\mathcal{P})$ the variable $x_{a}$ in $S$ by $x_{ij}$. The polyomino ideal $I_{\mathcal{P}}$ may be viewed as follows as a toric ideals. 
Consider the $K$-algebra homomorphism $\phi: S\rightarrow R$ with $\phi(x_{ij})=x_{i}y_{j}$ for all $(i,j)\in V(\mathcal{P})$. It follows that $I_{\mathcal{P}}=Ker(\phi)$ and $K[\mathcal{P}]$ 
may be indentified with the toric ring $R$. It follows that $R$ is normal and we recall that by a well known result of Danilov and Stanley $(\cite{BG}, \cite{BH}, \cite{MS}, \cite{V} )$ the canonical module $\omega_{R}$ of
 $R,$ with respect to
standard grading, can be expressed as an ideal of $R$ generated by
 monomials, that is
$\omega_{R}=(\{x^{a}| \ a\in {\mathbb{N}}B\cap
 \relint({\mathbb{R_{+}}}B)\}),$ where  \[B:=\{
\log(x_{i}y_{j}) \ | \ (i,j)\in V(\mathcal{P})\}\subset \NN^{m+n}\] is the set of the exponents of the generators of the toric ring $R$.

\subsection{Simple paths with general bounderies}
We briefly recall the most general problem to count paths in a region that is bounded by nonliniar upper and lower boundaries (see $\cite{K}$ for more details). 

Let $a_{1}\leq a_{2}\leq \cdots \leq a_{n}$ and $b_{1}\leq b_{2}\leq \cdots \leq b_{n}$ be integers with $a_{i}\geq b_{i}.$ We abbreviate $\bf{a}$=$(a_{1},a_{2},\ldots,a_{n})$ and $\bf{b}$=$(b_{1},b_{2},\ldots,b_{n}).$ 
 By $L((0,b_{1})\rightarrow (n,a_{n}) \ | \  \bf{a} \geq \bf{y} \geq \bf{b})$  we denote the set of all lattice paths from $(0,b_{1})$ to $(n,a_{n})$ that satisfy the property that for all $i=1, 2, \dots, n$ the height $y_{i}$
of the $i-$th horizontal step is in the interval $[b_{i}, a_{i}].$ 
\begin{theorem} $( \cite{K},$ \textrm{Theorem  10.7.1.}$)$
Let $\bf{a}$=$(a_{1},a_{2},\ldots,a_{n})$ and $\bf{b}$=$(b_{1},b_{2},\ldots,b_{n})$ be integer sequences with $a_{1}\leq a_{2}\leq \cdots \leq a_{n},$ $b_{1}\leq b_{2}\leq \cdots \leq b_{n}$ and $a_{i}\geq b_{i},$ $i=1, 2, \dots, n.$
The number of all paths from $(0,b_{1})$ to $(n,a_{n})$ satisfying the property that for all $i=1, 2, \dots, n$ the height $y_{i}$
of the $i-$th horizontal step is between $b_{i}$ and $a_{i}$ is given by \[ | \ L((0,b_{1})\rightarrow (n,a_{n}) \ | \  {\bf{a} \geq \bf{y} \geq \bf{b})} \ |   = \det_{1\leq i,j \leq n} \left(  \binom{a_{i}-b_{j}+1}{j-i+1} \right) .\]
\end{theorem}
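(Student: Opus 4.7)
The plan is to apply the Lindstr\"om--Gessel--Viennot (LGV) lemma for non-intersecting lattice paths. The strategy is to realize the $(i,j)$-entry of the matrix as a count of east/north lattice paths between two fixed lattice points, so that the determinant becomes the LGV signed sum over $n$-tuples of paths, and then to biject the non-intersecting tuples that survive sign-cancellation with the bounded lattice paths on the left-hand side.

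First I would fix the endpoints. For $j = 1, \ldots, n$, set
\[
A_j := (-j,\, b_j + j), \qquad E_j := (-j + 1,\, a_j + j),
\]
and consider paths on $\mathbb{Z}^2$ with unit east and north steps. A path from $A_j$ to $E_i$ uses exactly $j - i + 1$ east steps and $a_i - b_j + i - j$ north steps, giving $\binom{a_i - b_j + 1}{j - i + 1}$ such paths (with the convention that binomials with negative arguments vanish, matching the cases where no path exists). Thus the matrix whose determinant appears in the theorem coincides with $\bigl(e(A_j, E_i)\bigr)_{1 \leq i,j \leq n}$.

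Next I would exhibit a bijection between $L((0,b_1) \to (n,a_n) \mid \mathbf{a} \geq \mathbf{y} \geq \mathbf{b})$ and $n$-tuples $(P_1, \ldots, P_n)$ of vertex-disjoint paths with $P_j : A_j \to E_j$. Given east-step heights $b_i \leq y_i \leq a_i$ satisfying $y_1 \leq \cdots \leq y_n$, let $P_j$ climb north from $A_j$ to $(-j,\, y_j + j)$, step east to $(-j + 1,\, y_j + j)$, and then climb north to $E_j$. Each $P_j$ lives only on the two vertical lines $x = -j$ and $x = -j + 1$, so only consecutive $P_{j-1}$ and $P_j$ can share a vertex, and this occurs on $x = -j+1$: there $P_{j-1}$ occupies the heights $[b_{j-1} + j - 1,\, y_{j-1} + j - 1]$ while $P_j$ occupies $[y_j + j,\, a_j + j]$, and these intervals are disjoint precisely when $y_{j-1} \leq y_j$. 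Hence vertex-disjointness of the tuple is equivalent to the monotonicity of the heights, and the assignment is clearly invertible.

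The main obstacle is showing that in the LGV signed sum over permutations $\sigma$, only $\sigma = \mathrm{id}$ contributes---equivalently, verifying the Lindstr\"om compatibility: for any $i < j$ and $k < l$, every path from $A_i$ to $E_l$ meets every path from $A_j$ to $E_k$. The starts $\{A_j\}$ and the ends $\{E_i\}$ are each arranged in $\mathbb{Z}^2$ with strictly decreasing $x$-coordinate and strictly increasing $y$-coordinate, and the monotonicity of $\mathbf{a}$, $\mathbf{b}$ together with $a_i \geq b_i$ forces the two east/north paths associated with any descent to share a vertex on a common vertical line (a short case analysis: both paths traverse the line $x = -i$, and the constraint that a path $A_j \to E_k$ exists is exactly the constraint that its vertical segment at $x = -i$ meets that of $A_i \to E_l$). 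Granting this compatibility, LGV yields
\[
\det_{1 \leq i, j \leq n}\left(\binom{a_i - b_j + 1}{j - i + 1}\right) \;=\; \#\{\text{non-intersecting }(P_1, \ldots, P_n) \text{ with } P_j : A_j \to E_j\},
\]
which by the bijection above equals $|L((0,b_1) \to (n,a_n) \mid \mathbf{a} \geq \mathbf{y} \geq \mathbf{b})|$, completing the proof.
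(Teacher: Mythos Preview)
The paper does not prove this statement at all: it is Theorem~10.7.1 quoted verbatim from Krattenthaler's survey~\cite{K}, with no argument supplied. So there is no ``paper's own proof'' to compare against; you are supplying a proof where the paper merely cites one. Your approach via the Lindstr\"om--Gessel--Viennot lemma is in fact the standard one and is essentially the argument Krattenthaler gives in the cited source, so in that sense you are reproducing the original proof rather than offering an alternative.

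Your bijection between bounded paths and non-intersecting tuples $(P_1,\ldots,P_n)$ is correct and cleanly explained: each $P_j$ has a single east step, whose height encodes $y_j$, and the disjointness condition on consecutive paths recovers exactly $y_{j-1}\le y_j$. The one soft spot is the compatibility verification. Your sentence ``the constraint that a path $A_j\to E_k$ exists is exactly the constraint that its vertical segment at $x=-i$ meets that of $A_i\to E_l$'' is not literally right and does not constitute an argument. A clean way to finish: suppose $i<j$ and consider paths $Q:A_i\to E_l$ and $R:A_j\to E_k$ with $l>k$. For both to exist one needs $l\le i+1$ and $k\le j+1$; in particular $Q$'s $x$-range $[-i,-l+1]$ is contained in $R$'s $x$-range $[-j,-k+1]$. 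At $x=-i$ the path $R$ is at height at least $b_j+j>b_i+i$, hence strictly above $Q$'s starting point; at $x=-l+1$ the path $R$ is at height at most $a_k+k<a_l+l$, hence strictly below $Q$'s endpoint. Since east/north paths are monotone, $Q$ and $R$ must share a lattice point. That replaces the ``short case analysis'' and makes the LGV step rigorous.
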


\section{The Main result}

\begin{definition}
For $n, t, p \in \mathbb{N}$, $1\leq t < n$, $p\geq 1$ we define $ \begin{bmatrix}  n \\ t \end{bmatrix}_{p} $  to be the number of elements of $A(n,t,p)$, where
\[A(n,t,p):=\{\alpha \in \ZZ_>^{pt+1} \ | \ \sum_{i=1}^{pt+1} \alpha_{i} = p(n-t), \sum_{i=1}^{kt} \alpha_{i} \leq k(n-t) \ \textrm{for} \ \textrm{all} \ k \in [p-1]\}.\]
\end{definition}

\begin{proposition}
For any $n, t, p \in \mathbb{N}$, $1\leq t < n$, $p\geq 1$ we have:\\
$1)$ \(
\begin{bmatrix}  n \\ t \end{bmatrix}_{p} = \begin{bmatrix}  n \\ n-t \end{bmatrix}_{p}.
\)\\
$2)$
\(\begin{bmatrix}  n \\ 1 \end{bmatrix}_{p}=\begin{bmatrix}  n \\ n-1 \end{bmatrix}_{p}=C_{p+1}(n), \textrm{where} \ C_{p}(n)=\frac{1}{(n-1)p+1}\binom{np}{p} \ \textrm{are} \ \textrm{the} \ 
\textrm{Fuss-Catalan} \ \textrm{numbers}.\)
\end{proposition}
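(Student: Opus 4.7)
\noindent My plan is to encode elements of $A(n,t,p)$ as lattice paths and argue by path surgery. Attach to $\alpha=(\alpha_1,\dots,\alpha_{pt+1})\in A(n,t,p)$ the monotone lattice path
\[
\pi(\alpha)\;=\;U^{\alpha_1}\,R\,U^{\alpha_2}\,R\,\cdots\,R\,U^{\alpha_{pt+1}}
\]
from $(0,0)$ to $(pt,p(n-t))$. Under this encoding, the condition $\sum_{i=1}^{kt}\alpha_i\le k(n-t)$ is exactly ``the $(kt)$-th $R$-step of $\pi(\alpha)$ occurs at height at most $k(n-t)$'', and this is equivalent to the positional assertion ``the $(kt)$-th $R$-step of $\pi(\alpha)$ precedes its $(k(n-t)+1)$-th $U$-step''.

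For (1), I would define $\varphi\colon A(n,t,p)\to A(n,n-t,p)$ by reversing the step sequence of $\pi(\alpha)$ and then interchanging $R$ and $U$. The resulting path $\tilde\pi$ runs from $(0,0)$ to $(p(n-t),pt)$ with the correct step counts, so reading off its analogous encoding yields some $\beta\in\NN^{p(n-t)+1}$ of total sum $pt$. Under the reversal, the $j$-th $R$ of $\pi(\alpha)$ becomes the $(pt-j+1)$-th $R$ of the reversed word (and analogously for $U$'s), and the $R\leftrightarrow U$ swap exchanges the roles of $R$ and $U$ in the positional inequality; chasing indices, the $A(n,t,p)$-constraint at index $k\in[p-1]$ is carried to the $A(n,n-t,p)$-constraint at index $p-k\in[p-1]$. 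Since $k\mapsto p-k$ is a bijection of $[p-1]$ onto itself, the two sets of constraints correspond exactly, so $\varphi$ preserves membership; and it is manifestly a bijection, both of its ingredients being invertible.

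For (2), I would identify $A(n,1,p)$ with a standard family of Fuss-Catalan lattice paths. Given $\alpha\in A(n,1,p)$, form $\sigma = R\cdot\pi(\alpha)\cdot U^{n-1}$, a lattice path from $(0,0)$ to $(p+1,(p+1)(n-1))$. One checks directly that for $1\le k\le p$ the maximal $y$-coordinate of $\sigma$ at $x=k$ equals $S_k:=\alpha_1+\cdots+\alpha_k$, while at $x=0$ and $x=p+1$ the Fuss-Catalan inequality $y\le(n-1)x$ is automatic (with values $0$ and $(p+1)(n-1)$ respectively). Hence the defining inequalities of $A(n,1,p)$ are precisely the requirement that $\sigma$ stay weakly below $y=(n-1)x$ at every lattice point. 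The inverse is transparent: any such $\sigma$ must begin with $R$ (forced at $x=0$) and end with at least $n-1$ consecutive $U$-steps (forced at $x=p+1$, since $S_p\le p(n-1)$), so stripping these recovers $\alpha$. The number of Fuss-Catalan paths of this shape is the classical
\[
\frac{1}{(n-1)(p+1)+1}\binom{n(p+1)}{p+1}\;=\;C_{p+1}(n),
\]
which gives $\begin{bmatrix}n\\1\end{bmatrix}_p=C_{p+1}(n)$, and combined with (1) also $\begin{bmatrix}n\\n-1\end{bmatrix}_p=C_{p+1}(n)$. The chief obstacle is the bookkeeping in (1): one has to recast the partial-sum inequality as a statement about the relative order of individual $R$- and $U$-steps in $\pi(\alpha)$, because that is the form in which reversal and the $R\leftrightarrow U$ swap act transparently; after this reformulation both parts reduce to elementary lattice-path manipulations together with the standard Fuss-Catalan count.
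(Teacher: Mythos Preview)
Your proof is correct and follows essentially the same route as the paper: both encode $A(n,t,p)$ by lattice paths from $(0,0)$ to $(pt,p(n-t))$ under a staircase constraint, prove (1) by the reverse-and-swap (antidiagonal reflection) bijection on such paths, and obtain (2) by recognizing the $t=1$ model as the standard Fuss--Catalan path model. The only cosmetic differences are that the paper also records the Krattenthaler determinant formula for each side before invoking the reflection, and you spell out the $t=1$ bijection explicitly via $\sigma=R\cdot\pi(\alpha)\cdot U^{\,n-1}$ rather than simply citing the standard model.
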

\begin{proof}
Let $n, t, p \in \mathbb{N}$, $1\leq t < n$, $p\geq 1$ be fixed. If we examine the elements of $A(n,t,p)$ then we see that we can get rid of
$\alpha_{pt+1}$ at the expense of getting another inequality: so the number $ \begin{bmatrix}  n \\ t \end{bmatrix}_{p} $  is the same as the number of all vectors

 \[(\alpha_{1}, \ldots, \alpha_{pt})\in \ZZ_>^{pt} \ \textrm{with} \
\sum_{i=1}^{kt} \alpha_{i} \leq k(n-t) \ \textrm{for} \ \textrm{all} \ k \in [p]. \ \ \ \ \  \ \ \ \ \  \  \  (1) \]
These vectors can be better represented by lattice paths from $(0,0)$ to $(pt,p(n-t))$ consisting of unit horizontal and vertical steps with the constraint that is imposed by the inequalities $(1).$
More precisely, between the vertical lines $x=(k-1)t$ and $x=kt$ the path must stay weakly below height $k(n-t)$ for any $k \in [p].$
If we denote by $\bf{a}$=$(a_{1},a_{2},\ldots,a_{pt})$ and $\bf{b}$=$(b_{1},b_{2},\ldots,b_{pt})$ where $a_{1}=\cdots =a_{t}=n-t,$ $a_{t+1}=\cdots =a_{2t}=2(n-t),$ \ldots, $a_{(p-1)t+1}=\cdots =a_{pt}=p(n-t)$ and $b_{1}=\cdots=b_{pt}=0$ 
 using $( \cite{K},$ \textrm{Theorem  10.7.1.}$)$ we have \[\begin{bmatrix}  n \\ t \end{bmatrix}_{p}=| \ L((0,0)\rightarrow (pt,a_{pt}) \ | \  {\bf{a} \geq \bf{y} \geq \bf{b})} \ |   = \det_{1\leq i,j \leq pt} \left(  \binom{a_{i}+1}{j-i+1} \right) .\]
By using similar argument, if we denote by $\bf{a'}$=$(a_{1}',a_{2}',\ldots,a_{p(n-t)}')$ and $\bf{b'}$=$(b_{1}',b_{2}',\ldots,b_{p(n-t)}')$ where 
$a_{1}'=\cdots =a_{n-t}'=t,$ $a_{n-t+1}'=\cdots =a_{2(n-t)}'=2t,$ \ldots, $a_{(p-1)(n-t)+1}'=\cdots =a_{p(n-t)}'=pt$ and $b_{1}'=\cdots=b_{p(n-t)}'=0$ 
we have \[\begin{bmatrix}  n \\ n-t \end{bmatrix}_{p}=| \ L((0,0)\rightarrow (p(n-t),a_{pt}') \ | \  {\bf{a'} \geq \bf{y} \geq \bf{b'})} \ |   = \det_{1\leq i,j \leq p(n-t)} \left(  \binom{a_{i}'+1}{j-i+1} \right) .\]
Since any path of $L((0,0)\rightarrow (p(n-t),a_{pt}')$ is a reflection of a path from $L((0,0)\rightarrow (pt,a_{pt})$ in a line parallel to the antidiagonal $x+y=0$ we get \(
\begin{bmatrix}  n \\ t \end{bmatrix}_{p} = \begin{bmatrix}  n \\ n-t \end{bmatrix}_{p}.\)\\ For $t=1$ one obtains \(\begin{bmatrix}  n \\ 1 \end{bmatrix}_{p}=C_{p+1}(n), \) 
Fuss-Catalan numbers since the above lattice path model reduces in that case to the standard lattice path model for Fuss-Catalan numbers.
\end{proof}

\begin{corollary}
For any $n, t, p \in \mathbb{N}$, $1\leq t < n$, $p\geq 1$ we have:\\
$1)$ For $a_{1}=\cdots =a_{t}=n-t,$ $a_{t+1}=\cdots =a_{2t}=2(n-t),$ \ldots, $a_{(p-1)t+1}=\cdots =a_{pt}=p(n-t)$ 
we have \[\begin{bmatrix}  n \\ t \end{bmatrix}_{p} = \det_{1\leq i,j \leq pt} \left(  \binom{a_{i}+1}{j-i+1} \right) .\]
$2)$ For $a_{1}'=\cdots =a_{n-t}'=t,$ $a_{n-t+1}'=\cdots =a_{2(n-t)}'=2t,$ \ldots, $a_{(p-1)(n-t)+1}'=\cdots =a_{p(n-t)}'=pt$  
we have \[\begin{bmatrix}  n \\ n-t \end{bmatrix}_{p}= \det_{1\leq i,j \leq p(n-t)} \left(  \binom{a_{i}'+1}{j-i+1} \right) .\]
\end{corollary}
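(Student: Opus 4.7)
The plan is to read both identities straight off the lattice-path argument already carried out in the proof of Proposition 3.2, with Theorem 2.2 supplying the determinantal evaluation in each case.

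For part 1), I would recall from the proof of the proposition that elements of $A(n,t,p)$, after dropping $\alpha_{pt+1}$, correspond bijectively to integer vectors $(\alpha_1,\dots,\alpha_{pt})\in\ZZ_>^{pt}$ satisfying $\sum_{i=1}^{kt}\alpha_i\leq k(n-t)$ for $k\in[p]$. These in turn are encoded by lattice paths from $(0,0)$ to $(pt,p(n-t))$ whose $i$-th horizontal step has height $y_i$ in the interval $[0,a_i]$, where $a_{(k-1)t+1}=\cdots=a_{kt}=k(n-t)$. Applying Theorem 2.2 to the sequences $\bf{a}=(a_1,\dots,a_{pt})$ and $\bf{b}=(0,0,\dots,0)$ yields
\[
\begin{bmatrix} n\\ t\end{bmatrix}_p \;=\; \det_{1\leq i,j\leq pt}\binom{a_i-b_j+1}{j-i+1} \;=\; \det_{1\leq i,j\leq pt}\binom{a_i+1}{j-i+1},
\]
which is exactly the first formula.

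For part 2), I would run the same construction with the roles of $t$ and $n-t$ interchanged. Elements of $A(n,n-t,p)$ are in bijection with lattice paths from $(0,0)$ to $(p(n-t),pt)$ whose $i$-th horizontal step height lies in $[0,a_i']$, where $a_1'=\cdots=a_{n-t}'=t$, $a_{n-t+1}'=\cdots=a_{2(n-t)}'=2t$, and so on up to $a_{(p-1)(n-t)+1}'=\cdots=a_{p(n-t)}'=pt$. A second application of Theorem 2.2, this time with $\bf{a'}$ and $\bf{b'}=\bf{0}$, gives the claimed expression as a $p(n-t)\times p(n-t)$ determinant.

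There is no real obstacle to overcome — both identities are packaged inside the derivation of Proposition 3.2, and the corollary simply isolates them. The one place where I would be careful is in confirming that swapping $t\leftrightarrow n-t$ in the path model produces exactly the bounding sequence $\bf{a'}$ displayed in the statement, since the reflection argument used in the proposition to establish $\begin{bmatrix}n\\t\end{bmatrix}_p=\begin{bmatrix}n\\n-t\end{bmatrix}_p$ relates the two path families but one still needs the direct description of the $(n-t)$-side model to invoke Theorem 2.2 with the correct indices.
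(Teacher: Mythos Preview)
Your proposal is correct and follows the paper's approach exactly: the paper's own proof of this corollary is simply ``See the proof above,'' referring to the proof of the preceding proposition, where the two determinantal formulas are derived precisely as you describe by modeling $A(n,t,p)$ and $A(n,n-t,p)$ as bounded lattice paths and applying Theorem~2. Your caution about checking that the $t\leftrightarrow n-t$ swap yields the stated sequence $\mathbf{a'}$ is well placed but unnecessary here, since the proposition's proof already writes down that sequence explicitly before invoking the theorem.
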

\begin{proof}
See the proof above.
\end{proof}

\begin{example}
For $n=3$, $t=1$ and $p=3$ we have:\\
$i)$ $\bf{a}$=$(2,4,6)$,  \[\begin{bmatrix}  3 \\ 1 \end{bmatrix}_{3}= \det_{1\leq i,j \leq 3} \left(  \binom{a_{i}+1}{j-i+1} \right)= \begin{vmatrix}
3 & 3 & 1 \\ 
1 & 5 & 10 \\ 
0 & 1 & 7  \notag
\end{vmatrix} =55=C_{4}(3).\]
$ii)$ $\bf{a}'$=$(1,1,2,2,3,3)$,  \[\begin{bmatrix}  3 \\ 2 \end{bmatrix}_{3}= \det_{1\leq i,j \leq 6} \left(  \binom{a_{i}'+1}{j-i+1} \right)= \begin{vmatrix}
2 & 1 & 0 & 0 & 0 & 0 \\ 
1 & 2 & 1 & 0 & 0 & 0 \\ 
0 & 1 & 3 & 3 & 1 & 0 \\ 
0 & 0 & 1 & 3 & 3 & 1 \\ 
0 & 0 & 0 & 1 & 4 & 6 \\ 
0 & 0 & 0 & 0 & 1 & 4  \notag
\end{vmatrix} =55.\]
\end{example}

Let $\mathcal{P}=\mathcal{P}
  \left( {\begin{array}{cccc}
   u_{1} & u_{2} & . . . & u_{p} \\
   r_{1} & r_{2} & . . . & r_{p} \\
  \end{array} } \right)$ 
be the polyomino of Figure 1. The coordinate ring $K[\mathcal{P}]$ is a normal Cohen-Macaulay domain and it may be identified with the toric ring $R=K[x_{i}y_{j} \ | \ (i,j)\in V(\mathcal{P})]$.
We will describe the irreducible representation of the cone generated by \[B=\{\log(x_{i}y_{j}) \ | \ (i,j)\in V(\mathcal{P})\}\subset \NN^{B_{p}+A_{p}},\] 
the set of the exponents of the generators of the toric ring $R$.\\
Let \[\nu_{i}=-\sum_{k\in [B_{i}-1]}e_{k}+\sum_{k\in [A_{i}]}e_{B_{p}+k}, \  \ \nu=\sum_{k\in [B_{p}]}e_{k}-\sum_{k\in [A_{p}]}e_{B_{p}+k}, \] for $i\in [p-1]$, where $\{e_{k}\}_{1\leq k\leq A_{p}+B_{p}}$ is the canonical base of $\RR^{A_{p}+B_{p}}$.
\begin{lemma}
The cone generated by $B=\{\log(x_{i}y_{j}) \ | \ (i,j)\in V(\mathcal{P})\}\subset \NN^{B_{p}+A_{p}}$, the set of the exponents of the generators of the toric ring $R$, 
has irreducible representation \[\RR_{+}B=(\bigcap_{a\in N} H_{a}^{+})\cap H_{\nu},\]
where $N=\{\nu_{i}, \ e_{k} \ | \ i\in[p-1], \ k\in[A_{p}+B_{p}]\}$.
\end{lemma}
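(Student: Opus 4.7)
The plan is to verify $\RR_+B\subseteq(\bigcap_{a\in N}H_a^+)\cap H_\nu$ by a direct inspection of the generators, to establish the reverse inclusion by a max-flow / Hall--K\"onig argument exploiting the staircase shape of $\mathcal{P}$, and finally to check that each of the listed hyperplanes is essential.

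First I would verify that every generator $e_k+e_{B_p+l}$ (with $(k,l)\in V(\mathcal{P})$) satisfies the defining (in)equalities: the pairing with $\nu$ is $1-1=0$, and the pairing with each $e_j$ is obviously non-negative. For $\nu_i$ ($i\in[p-1]$) the decisive fact is that the first $B_i-1$ columns of $\mathcal{P}$ have heights $A_1-1,\ldots,A_i-1$, all bounded by $A_i-1$, so any vertex $(k,l)\in V(\mathcal{P})$ with $k\le B_i-1$ automatically satisfies $l\le A_i$; hence $\langle e_k+e_{B_p+l},\nu_i\rangle\in\{0,1\}$.

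For the reverse inclusion I would take $v=(x_1,\ldots,x_{B_p},y_1,\ldots,y_{A_p})$ in the intersection and interpret it as a transportation problem: sources $[B_p]$ with supplies $x_k$, sinks $[A_p]$ with demands $y_l$, and admissible edges $V(\mathcal{P})$. Setting $f(k)=\max\{l:(k,l)\in V(\mathcal{P})\}$, the neighbourhood of a source subset $S$ is $N(S)=\{1,\ldots,\max_{k\in S}f(k)\}$, and $f$ takes only the values $A_1<\cdots<A_p$. Consequently the Hall--K\"onig feasibility condition
\[
\sum_{k\in S}x_k\le\sum_{l\in N(S)}y_l\qquad\forall\,S\subseteq[B_p]
\]
reduces to its strongest instances $S=\{1,\ldots,B_i-1\}$, which are precisely the inequalities $\langle v,\nu_i\rangle\ge 0$ for $i\in[p-1]$, together with the equation $\langle v,\nu\rangle=0$ from $H_\nu$ when $i=p$. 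Combined with the non-negativity of coordinates, a feasible flow $(f_{kl})\ge 0$ supported on $V(\mathcal{P})$ therefore exists, giving $v=\sum f_{kl}(e_k+e_{B_p+l})\in\RR_+B$.

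To conclude, I would verify irredundancy by producing, for each listed hyperplane, $A_p+B_p-2$ linearly independent elements of $B$ lying on it (the facet dimension of $\RR_+B$). The case $H_\nu$ is automatic; the cases $H_{e_j}$ follow because removing a single source or sink leaves a connected bipartite subgraph. For $H_{\nu_i}$ the generators on it split into a ``lower-left'' block $(k\le B_i-1,\,l\le A_i)$ and an ``upper-right'' block $(k\ge B_i,\,l>A_i)$ with disjoint coordinate supports; each yields a connected bipartite subgraph, producing spans of dimensions $A_i+B_i-2$ and $A_p+B_p-A_i-B_i$ respectively, which sum to the required $A_p+B_p-2$. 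The main obstacle will be the reverse inclusion: one must verify that, among all Hall--K\"onig cuts, only the staircase cuts $S=\{1,\ldots,B_i-1\}$ are extremal, which uses the monotonicity of $f$ and the precise shape of $\mathcal{P}$.
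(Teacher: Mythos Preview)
Your plan is correct and would go through, but it takes a genuinely different route from the paper for the reverse inclusion. The paper argues via extremal rays: since every extremal ray of $(\bigcap_{a\in N}H_a^+)\cap H_\nu$ is the intersection of $A_p+B_p-2$ of the listed hyperplanes, it exhibits for each generator $g_{i,j}^t$ such a system and concludes (rather implicitly) that these exhaust the extremal rays. Your transportation / Hall--K\"onig argument is more direct and arguably more rigorous: instead of classifying extreme rays you produce the conic combination itself, and the reduction of the general cut condition $\sum_{k\in S}x_k\le\sum_{l\in N(S)}y_l$ to the staircase cuts $S=[B_i-1]$ via the monotonicity of $f(k)=\max\{l:(k,l)\in V(\mathcal P)\}$ is exactly the right observation here. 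Your plan also explicitly verifies irredundancy by exhibiting $A_p+B_p-2$ linearly independent generators on each listed hyperplane (using connectedness of the two bipartite blocks for $H_{\nu_i}$), a point the paper's proof does not address separately; the paper instead remarks afterwards that the result can alternatively be derived from the Ohsugi--Hibi description of facets of edge polytopes. The only place to be slightly careful in writing up your argument is the connectedness checks (e.g.\ that removing any single $x_k$ or $y_l$ leaves the bipartite graph connected, and that the ``upper-right'' block for $H_{\nu_i}$ is connected), but these follow easily from the fact that every column of $\mathcal P$ contains the vertex $(k,1)$ and the last column contains all $(B_p,l)$.
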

\begin{proof}
The set of the exponents of the generators of the toric ring $R$ consists of the vectors $g_{i,j}^{1}=e_{i}+e_{B_{p}+j}$ for $i\in[B_{1}-1], \ j\in[A_{1}],$ 
and $g_{i,j}^{t}=e_{i}+e_{B_{p}+j}$ for $2\leq t \leq p, \ B_{t-1}\leq i \leq B_{t}-1, \ j\in[A_{t}]$. \\
Now we will prove that $\RR_{+}B\subseteq H_{a}^{+}\cap H_{\nu}$ 
for any $a\in N$. It is clear that $\RR_{+}B\subseteq H_{\nu}$ and 
$\RR_{+}B\subseteq H_{e_{k}}^{+}$ for all $k\in[A_{p}+B_{p}]$.
If $i\in[B_{1}-1]$ and $j\in [A_{1}]$ we have $g_{i,j}^{1} \in H_{\nu_{s}}$ for any $s\in [p-1]$. If $2\leq t \leq p$, $B_{t-1}\leq i \leq B_{t}-1$, $j\in [A_{t}]$ 
then $g_{i,j}^{t} \in H_{\nu_{s}}^{+}$ for any $1\leq s \leq t-1$ and $g_{i,j}^{t} \in H_{\nu_{s}}$ for any $t\leq s \leq p$. 
Thus $\RR_{+}B\subseteq(\bigcap_{a\in N} H_{a}^{+})\cap H_{\nu}$.\\
Now we will prove the converse inclusion 
$\RR_{+}B\supseteq (\bigcap_{a\in N}H^{+}_{a})\cap H_{\nu}.$\\ It is  
enough to prove that the extremal rays of the cone 
$\bigcap_{a\in N}H^{+}_{a}$ \ are in
${\RR_{+}}B.$ \ Any extremal ray of the cone  
$\bigcap_{a\in N}H^{+}_{a}$ \ can be written  as the intersection of  
$A_{p}+B_{p}-2$ hyperplanes $H_{a}$,  with $a\in N.$ We will prove that all the exponents of the generators of the toric ring $R$ 
are the extremal rays of the cone $\bigcap_{a\in N}H^{+}_{a}$. There are the following posibilities to obtain extremal rays by intersection of $A_{p}+B_{p}-2$ hyperplanes.

$1.$ For $i\in [B_{1}-1]$ and $j\in [A_{1}]$,  let $k_{1}< \ldots < k_{A_{p}+B_{p}-3}$ be the sequence of integers such that $\{k_{1}, \ldots , k_{A_{p}+B_{p}-3}\}\subset [A_{p}+B_{p}]\setminus \{i, B_{p}+j\}$.\\
The system of equations :\\
$ \ (*) \ \begin{cases}
z_{k_{1}}=0,\\
z_{k_{2}}=0,\\
\vdots \\
z_{k_{A_{p}+B_{p}-3}}=0,\\
H_{\nu_{1}}(z)=0.
\end{cases}$ \\
admits the solution $g_{i,j}^{1}.$

$2.$ For $B_{t-1}\leq i\leq B_{t}-1$ and $j\in [A_{t}]$ with $2\leq t \leq p-1$,  let $k_{1}< \ldots < k_{A_{p}+B_{p}-3}$ be the sequence of integers such that $\{k_{1}, \ldots , k_{A_{p}+B_{p}-3}\}\subset [A_{p}+B_{p}]\setminus \{i, B_{p}+j\}$. \\The system of equations :\\
$ \ (**) \ \begin{cases}
z_{k_{1}}=0,\\
z_{k_{2}}=0,\\
\vdots \\
z_{k_{A_{p}+B_{p}-3}}=0,\\
H_{\nu_{t}}(z)=0.
\end{cases}$\\
admits the solution $g_{i,j}^{t}$.

$3.$ For $B_{p-1}\leq i\leq B_{p}$ and $j\in [A_{p}]$,  let $k_{1}< \ldots < k_{A_{p}+B_{p}-2}$ be the sequence of integers such that $\{k_{1}, \ldots , k_{A_{p}+B_{p}-2}\}= [A_{p}+B_{p}]\setminus \{i, B_{p}+j\}$.\\
The system of equations :\\
$ \ (***) \ \begin{cases}
z_{k_{1}}=0,\\
z_{k_{2}}=0,\\
\vdots \\
z_{k_{A_{p}+B_{p}-2}}=0.
\end{cases}$\\
admits the solution $g_{i,j}^{p}$.\\
Thus $\RR_{+}B\supseteq(\bigcap_{a\in N} H_{a}^{+})\cap H_{\nu}$.
\end{proof}
\begin{rem}
If $G=(V(G), E(G))$ is a finite connected graph on the vertex set $V(G)=[n], n\geq 2$ and $e=\{i,j\}\in E(G)$ is an edge then we define $\rho(e) \in \RR^{n}$ by $\rho(e)=e_{i}+e_{j}$ where $e_{i}$ is the $i$th unit coordinate vector in $\RR^n$.
We write $\mathcal{P}_{G}\subset \RR^{n}$ for the convex hull of the finite set $\{\rho(e) \ | \ e \in E(G)\} \subset \RR^{n}$ and we call $\mathcal{P}_{G}$ the \emph{edge polytope} of $G.$  In $($\cite{OH},  \textrm{Theorem  1.7}$)$
 Ohsugi and Hibi described the set of facets of the edge polytope $\mathcal{P}_{G}$. Lemma 7 can be derived from this result and using $( \cite{Z},$ \textrm{Theorems 1.1, 1.2, 1.3.}$)$.
\end{rem}
\begin{example} 
We present two polyominoes $\mathcal{P}_{1}, \mathcal{P}_{2}$ whose Cohen-Macaulay types of coordinate rings are $r(K[\mathcal{P}_{1}])=\begin{bmatrix}  3 \\ 1 \end{bmatrix}_{3}$ and
$r(K[\mathcal{P}_{2}])=\begin{bmatrix}  3 \\ 2 \end{bmatrix}_{3}$.
\end{example}
$i)$ Consider the polyomino $\mathcal{P}_{1}=\mathcal{P}
  \left( {\begin{array}{cccc}
   3 & 3 & 3 \\
   1 & 1 & 1 \\
  \end{array} } \right)$. We have $A_{1}=4, \ A_{2}=7, \ A_{3}=10$ and $B_{1}=2, \ B_{2}=3, \ B_{3}=4$. The quotient ring by the polyomino ideal $I_{\mathcal{P}_{1}}$ is isomorphic to the edge subring 
	\[R_{1}=K[x_{1}y_{1},\ldots, x_{1}y_{4}, x_{2}y_{1},\ldots, x_{2}y_{7}, x_{3}y_{1}, \ldots, x_{3}y_{10}, x_{4}y_{1}, \ldots, x_{4}y_{10}].\]
	Computation with NORMALIZ, we get the Hilbert series of $R_{1}$ \[H_{R_{1}}(t)=\frac{1+18t+66t^2+55t^3}{(1-t)^{13}}\]
	and the generators of the canonical module of $R_{1}$
	\begin{center}
\begin{tabular}{c c c c c c}
  $\omega_{R_{1}}$=($x_{1}x_{2}x_{3}x_{4}^{7}y$, & $x_{1}x_{2}x_{3}^{2}x_{4}^{6}y$, & $x_{1}x_{2}x_{3}^{3}x_{4}^{5}y$, & $x_{1}x_{2}x_{3}^{4}x_{4}^{4}y$, & $x_{1}x_{2}x_{3}^{5}x_{4}^{3}y$, & $x_{1}x_{2}x_{3}^{6}x_{4}^{2}y$,   \\
	\ \ \ \ \ \ \ \ \ $x_{1}x_{2}x_{3}^{7}x_{4}y$, & $x_{1}x_{2}^{2}x_{3}x_{4}^{6}y$, & $x_{1}x_{2}^{2}x_{3}^{2}x_{4}^{5}y$, & $x_{1}x_{2}^{2}x_{3}^{3}x_{4}^{4}y$, & $x_{1}x_{2}^{2}x_{3}^{4}x_{4}^{3}y$, & $x_{1}x_{2}^{2}x_{3}^{5}x_{4}^{2}y$,  \\ 
	\ \ \ \ \ \ \ \ \ $x_{1}x_{2}^{2}x_{3}^{6}x_{4}y$, & $x_{1}x_{2}^{3}x_{3}x_{4}^{5}y$, & $x_{1}x_{2}^{3}x_{3}^{2}x_{4}^{4}y$, & $x_{1}x_{2}^{3}x_{3}^{3}x_{4}^{3}y$, & $x_{1}x_{2}^{3}x_{3}^{4}x_{4}^{2}y$, & $x_{1}x_{2}^{3}x_{3}^{5}x_{4}y$,   \\  
	\ \ \ \ \ \ \ \ \ $x_{1}x_{2}^{4}x_{3}x_{4}^{4}y$, & $x_{1}x_{2}^{4}x_{3}^{2}x_{4}^{3}y$, & $x_{1}x_{2}^{4}x_{3}^{3}x_{4}^{2}y$, & $x_{1}x_{2}^{4}x_{3}^{4}x_{4}y$, & $x_{1}x_{2}^{5}x_{3}x_{4}^{3}y$, & $x_{1}x_{2}^{5}x_{3}^{2}x_{4}^{2}y$, \\
	\ \ \ \ \ \ \ \ \ $x_{1}x_{2}^{5}x_{3}^{3}x_{4}y$, & $x_{1}^{2}x_{2}x_{3}x_{4}^{6}y$, & $x_{1}^{2}x_{2}x_{3}^{2}x_{4}^{5}y$, & $x_{1}^{2}x_{2}x_{3}^{3}x_{4}^{4}y$, & $x_{1}^{2}x_{2}x_{3}^{4}x_{4}^{3}y$, & $x_{1}^{2}x_{2}x_{3}^{5}x_{4}^{2}y$, \\
	\ \ \ \ \ \ \ \ \ $x_{1}^{2}x_{2}x_{3}^{6}x_{4}y$, & $x_{1}^{2}x_{2}^{2}x_{3}x_{4}^{5}y$, & $x_{1}^{2}x_{2}^{2}x_{3}^{2}x_{4}^{4}y$, & $x_{1}^{2}x_{2}^{2}x_{3}^{3}x_{4}^{3}y$, & $x_{1}^{2}x_{2}^{2}x_{3}^{4}x_{4}^{2}y$, & $x_{1}^{2}x_{2}^{2}x_{3}^{5}x_{4}y$, \\
\ \ \ \ \ \ \ \ \ $x_{1}^{2}x_{2}^{3}x_{3}x_{4}^{4}y$, & $x_{1}^{2}x_{2}^{3}x_{3}^{2}x_{4}^{3}y$, & $x_{1}^{2}x_{2}^{3}x_{3}^{3}x_{4}^{2}y$, & $x_{1}^{2}x_{2}^{3}x_{3}^{4}x_{4}y$, & $x_{1}^{2}x_{2}^{4}x_{3}x_{4}^{3}y$, & $x_{1}^{2}x_{2}^{4}x_{3}^{2}x_{4}^{2}y$, \\
\ \ \ \ \ \ \ \ \ $x_{1}^{2}x_{2}^{4}x_{3}^{3}x_{4}y$, & $x_{1}^{3}x_{2}x_{3}x_{4}^{5}y$, & $x_{1}^{3}x_{2}x_{3}^{2}x_{4}^{4}y$, & $x_{1}^{3}x_{2}x_{3}^{3}x_{4}^{3}y$, & $x_{1}^{3}x_{2}x_{3}^{4}x_{4}^{2}y$, & $x_{1}^{3}x_{2}x_{3}^{5}x_{4}y$, \\
\ \ \ \ \ \ \ \ \ $x_{1}^{3}x_{2}^{2}x_{3}x_{4}^{4}y$, & $x_{1}^{3}x_{2}^{2}x_{3}^{2}x_{4}^{3}y$, & $x_{1}^{3}x_{2}^{2}x_{3}^{3}x_{4}^{2}y$, & $x_{1}^{3}x_{2}^{2}x_{3}^{4}x_{4}y$, & $x_{1}^{3}x_{2}^{3}x_{3}x_{4}^{3}y$, & $x_{1}^{3}x_{2}^{3}x_{3}^{2}x_{4}^{2}y$,\\
\ \ \ \ \ \ \ \ \ \ \ \ \ \ $x_{1}^{3}x_{2}^{3}x_{3}^{3}x_{4}y)R_{1}, $ 
\end{tabular}
\end{center}  
where $y=y_{1}\cdot \ldots \cdot y_{10}.$ \\
 Hence, we have the Cohen-Macaulay type of $R_{1}$ is 55$=\begin{bmatrix}  3 \\ 1 \end{bmatrix}_{3}$.

$ii)$ Consider the polyomino $\mathcal{P}_{2}=\mathcal{P}
  \left( {\begin{array}{cccc}
   3 & 3 & 3 \\
   2 & 2 & 2 \\
  \end{array} } \right).$ We have $A_{1}=4, \ A_{2}=7, \ A_{3}=10$ and $B_{1}=3, \ B_{2}=5, \ B_{3}=7$. 
	The quotient ring by the polyomino ideal $I_{\mathcal{P}_{2}}$ is isomorphic to the edge subring 
	\[R_{2}=K[x_{1}y_{1},\ldots, x_{1}y_{4}, x_{2}y_{1},\ldots, x_{2}y_{4}, x_{3}y_{1},\ldots, x_{3}y_{7}, x_{4}y_{1},\ldots, x_{4}y_{7}, \]
	\[x_{5}y_{1}, \ldots, x_{5}y_{10}, x_{6}y_{1}, \ldots, x_{6}y_{10}, x_{7}y_{1}, \ldots, x_{7}y_{10}]. \ \ \ \ \]
		Computation with NORMALIZ, we get the Hilbert series of $R_{2}$ \[H_{R_{2}}(t)=\frac{1+36t+318t^2+960t^3+1071t^4+444t^5+55t^6}{(1-t)^{16}}\]
	and the generators of the canonical module of $R_{2}$
\begin{center}
\begin{tabular}{c c c c}
  $\omega_{R_{2}}$=($x_{1}x_{2}x_{3}x_{4}x_{5}x_{6}x_{7}^{4}y$, & $x_{1}x_{2}x_{3}x_{4}x_{5}x_{6}^{2}x_{7}^{3}y$, & $x_{1}x_{2}x_{3}x_{4}x_{5}x_{6}^{3}x_{7}^{2}y$, & $x_{1}x_{2}x_{3}x_{4}x_{5}x_{6}^{4}x_{7}y$, \\
	\ \ \ \ \ \ \ \ \ $x_{1}x_{2}x_{3}x_{4}x_{5}^{2}x_{6}x_{7}^{3}y$, & $x_{1}x_{2}x_{3}x_{4}x_{5}^{2}x_{6}^{2}x_{7}^{2}y$, & $x_{1}x_{2}x_{3}x_{4}x_{5}^{2}x_{6}^{3}x_{7}y$, & $x_{1}x_{2}x_{3}x_{4}x_{5}^{3}x_{6}x_{7}^{2}y$, \\
	\ \ \ \ \ \ \ \ \ $x_{1}x_{2}x_{3}x_{4}x_{5}^{3}x_{6}^{2}x_{7}y$, & $x_{1}x_{2}x_{3}x_{4}x_{5}^{4}x_{6}x_{7}y$, & $x_{1}x_{2}x_{3}x_{4}^{2}x_{5}x_{6}x_{7}^{3}y$, & $x_{1}x_{2}x_{3}x_{4}^{2}x_{5}x_{6}^{2}x_{7}^{2}y$, \\
	\ \ \ \ \ \ \ \ \ $x_{1}x_{2}x_{3}x_{4}^{2}x_{5}x_{6}^{3}x_{7}y$, & $x_{1}x_{2}x_{3}x_{4}^{2}x_{5}^{2}x_{6}x_{7}^{2}y$, & $x_{1}x_{2}x_{3}x_{4}^{2}x_{5}^{2}x_{6}^{2}x_{7}y$, & $x_{1}x_{2}x_{3}x_{4}^{2}x_{5}^{3}x_{6}x_{7}y$, \\
	\ \ \ \ \ \ \ \ \ $x_{1}x_{2}x_{3}x_{4}^{3}x_{5}x_{6}x_{7}^{2}y$, & $x_{1}x_{2}x_{3}x_{4}^{3}x_{5}x_{6}^{2}x_{7}y$, & $x_{1}x_{2}x_{3}x_{4}^{3}x_{5}^{2}x_{6}x_{7}y$, & $x_{1}x_{2}x_{3}^{2}x_{4}x_{5}x_{6}x_{7}^{3}y$, \\
	\ \ \ \ \ \ \ \ \ $x_{1}x_{2}x_{3}^{2}x_{4}x_{5}x_{6}^{2}x_{7}^{2}y$, & $x_{1}x_{2}x_{3}^{2}x_{4}x_{5}x_{6}^{3}x_{7}y$, & $x_{1}x_{2}x_{3}^{2}x_{4}x_{5}^{2}x_{6}x_{7}^{2}y$, & $x_{1}x_{2}x_{3}^{2}x_{4}x_{5}^{2}x_{6}^{2}x_{7}y$, \\
	\ \ \ \ \ \ \ \ \ $x_{1}x_{2}x_{3}^{2}x_{4}x_{5}^{3}x_{6}x_{7}y$, & $x_{1}x_{2}x_{3}^{2}x_{4}^{2}x_{5}x_{6}x_{7}^{2}y$, & $x_{1}x_{2}x_{3}^{2}x_{4}^{2}x_{5}x_{6}^{2}x_{7}y$, & $x_{1}x_{2}x_{3}^{2}x_{4}^{2}x_{5}^{2}x_{6}x_{7}y$, \\
	\ \ \ \ \ \ \ \ \ $x_{1}x_{2}x_{3}^{3}x_{4}x_{5}x_{6}x_{7}^{2}y$, & $x_{1}x_{2}x_{3}^{3}x_{4}x_{5}x_{6}^{2}x_{7}y$, & $x_{1}x_{2}x_{3}^{3}x_{4}x_{5}^{2}x_{6}x_{7}y$, & $x_{1}x_{2}^{2}x_{3}x_{4}x_{5}x_{6}x_{7}^{3}y$, \\
	\ \ \ \ \ \ \ \ \ $x_{1}x_{2}^{2}x_{3}x_{4}x_{5}x_{6}^{2}x_{7}^{2}y$, & $x_{1}x_{2}^{2}x_{3}x_{4}x_{5}x_{6}^{3}x_{7}y$, & $x_{1}x_{2}^{2}x_{3}x_{4}x_{5}^{2}x_{6}x_{7}^{2}y$, & $x_{1}x_{2}^{2}x_{3}x_{4}x_{5}^{2}x_{6}^{2}x_{7}y$, \\
	\ \ \ \ \ \ \ \ \ $x_{1}x_{2}^{2}x_{3}x_{4}x_{5}^{3}x_{6}x_{7}y$, & $x_{1}x_{2}^{2}x_{3}x_{4}^{2}x_{5}x_{6}x_{7}^{2}y$, & $x_{1}x_{2}^{2}x_{3}x_{4}^{2}x_{5}x_{6}^{2}x_{7}y$, & $x_{1}x_{2}^{2}x_{3}x_{4}^{2}x_{5}^{2}x_{6}x_{7}y$, \\
	\ \ \ \ \ \ \ \ \ $x_{1}x_{2}^{2}x_{3}^{2}x_{4}x_{5}x_{6}x_{7}^{2}y$, & $x_{1}x_{2}^{2}x_{3}^{2}x_{4}x_{5}x_{6}^{2}x_{7}y$, & $x_{1}x_{2}^{2}x_{3}^{2}x_{4}x_{5}^{2}x_{6}x_{7}y$, & $x_{1}^{2}x_{2}x_{3}x_{4}x_{5}x_{6}x_{7}^{3}y$, \\
	\ \ \ \ \ \ \ \ \ $x_{1}^{2}x_{2}x_{3}x_{4}x_{5}x_{6}^{2}x_{7}^{2}y$, & $x_{1}^{2}x_{2}x_{3}x_{4}x_{5}x_{6}^{3}x_{7}y$, & $x_{1}^{2}x_{2}x_{3}x_{4}x_{5}^{2}x_{6}x_{7}^{2}y$, & $x_{1}^{2}x_{2}x_{3}x_{4}x_{5}^{2}x_{6}^{2}x_{7}y$, \\
	\ \ \ \ \ \ \ \ \ $x_{1}^{2}x_{2}x_{3}x_{4}x_{5}^{3}x_{6}x_{7}y$, & $x_{1}^{2}x_{2}x_{3}x_{4}^{2}x_{5}x_{6}x_{7}^{2}y$, & $x_{1}^{2}x_{2}x_{3}x_{4}^{2}x_{5}x_{6}^{2}x_{7}y$, & $x_{1}^{2}x_{2}x_{3}x_{4}^{2}x_{5}^{2}x_{6}x_{7}y$, \\
	\ \ \ \ \ \ \ \ \ $x_{1}^{2}x_{2}x_{3}^{2}x_{4}x_{5}x_{6}x_{7}^{2}y$, & $x_{1}^{2}x_{2}x_{3}^{2}x_{4}x_{5}x_{6}^{2}x_{7}y$, & \ \ \ \ \ $x_{1}^{2}x_{2}x_{3}^{2}x_{4}x_{5}^{2}x_{6}x_{7}y)R_{2},$
\end{tabular}
\end{center}  
where $y=y_{1}\cdot \ldots \cdot y_{10}.$ \\
 Hence, we have the Cohen-Macaulay type of $R_{2}$ is 55$=\begin{bmatrix}  3 \\ 2 \end{bmatrix}_{3}$.	
	
In general it is almost impossible to describe the generators of the canonical module of the coordinate ring for a polyomino. 
Our final result gives a description in a particular case.\\

For $n, t, p \in \mathbb{N}$, $1\leq t < n$, $p\geq 1$, let $u_{1}=\cdots=u_{p}=n$ and  $r_{1}=\cdots=r_{p}=t$ and we consider 
$\mathcal{P}=\mathcal{P}
  \left( {\begin{array}{cccc}
   u_{1} & u_{2} & . . . & u_{p} \\
   r_{1} & r_{2} & . . . & r_{p} \\
  \end{array} } \right)$.
	
	\begin{theorem}
	Let $S=K[x_{1},\ldots, x_{pt+1}, y_{1},\ldots, y_{pn+1}]$ be a standard graded polynomial ring over a field $K$ and 
	$R=K[x_{i}y_{j} \ | \ (i,j)\in V(\mathcal{P})].$ Then the Cohen-Macaulay type of $R$ is $r(R)=\begin{bmatrix}  n \\ t \end{bmatrix}_{p} $.
	\end{theorem}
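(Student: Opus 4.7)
The plan is to combine the Danilov--Stanley description of $\omega_R$ with Lemma 7 to prove a shape theorem: every minimal monomial generator of $\omega_R$ has squarefree $y$-part $y_1 y_2\cdots y_{pn+1}$. Once this is in hand, the count falls out of Definition 8 via the substitution $\alpha_k\mapsto\alpha_k-1$.

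First I would unpack Lemma 7 into explicit inequalities. Writing a lattice point as $\gamma=(\alpha,\beta)\in \NN^{pt+1}\times\NN^{pn+1}$, normality of $R$ (which forces $\NN B=\ZZ B\cap\RR_+B$) combined with connectedness of the underlying bipartite graph gives that $\gamma\in \NN B\cap\relint(\RR_+B)$ if and only if $\alpha_k\ge 1$ and $\beta_j\ge 1$ for all $k,j$, $\sum_k\alpha_k=\sum_j\beta_j$, and
\[
\delta_s(\gamma):=\sum_{j=1}^{A_s}\beta_j-\sum_{k=1}^{B_s-1}\alpha_k \ge 1 \qquad\text{for all }s\in[p-1].
\]
Since $r(R)=\mu(\omega_R)$, the Cohen--Macaulay type counts those $\gamma$ that admit no decomposition $\gamma=\gamma'+h$ with $\gamma'$ again interior and $h\in\NN B\setminus\{0\}$.

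The central claim I would prove is that the minimal generators are exactly those $\gamma$ with $\beta=(1,\ldots,1)$. One direction is immediate: if $\beta$ is the all-ones vector then any nonzero $h\in\NN B$ contains some generator $g_{i,j}$ with positive multiplicity, forcing the $j$-th $y$-coordinate of $\gamma-h$ to drop below $1$. For the converse, suppose $\beta_{j_0}\ge 2$; I would choose $j_0$ minimal with this property (so $\beta_j=1$ for $j<j_0$), set $s_0=\min\{s:A_s\ge j_0\}$, and let $s_*$ be the smallest $s\in[s_0,p-1]$ with $\delta_s(\gamma)=1$, or $s_*=p$ if none exists. A direct calculation of how $\delta_s$ changes under subtraction of $g_{i_0,j_0}$ shows that $\gamma-g_{i_0,j_0}$ is still interior whenever $i_0\le B_{s_*}-1$ (replaced by $B_p$ if $s_*=p$) and $\alpha_{i_0}\ge 2$; moreover $(i_0,j_0)\in V(\mathcal P)$ amounts to $i_0\ge B_{s_0-1}$ (with the convention $B_0:=1$).

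The main obstacle is to produce $i_0$ with $\alpha_{i_0}\ge 2$ inside the admissible window $I$. Using the identities $\sum_{k\le B_s-1}\alpha_k=\sum_{j\le A_s}\beta_j-\delta_s(\gamma)$ at $s=s_0-1$ and $s=s_*$, together with $\beta_j=1$ for $j<j_0$ and $\beta_{j_0}\ge 2$, I obtain $\sum_{k\in I}\alpha_k\ge (s_*-s_0+1)n+1$ on a window of size $(s_*-s_0+1)t$; since $n>t$ the average strictly exceeds $1$, yielding the required $i_0$. The boundary cases $s_0=1$ and $s_*=p$ require only routine index bookkeeping. With the shape theorem established, the minimal generators biject with positive integer vectors $\alpha$ of length $pt+1$ satisfying $\sum_k\alpha_k=A_p=pn+1$ and $\sum_{k=1}^{it}\alpha_k\le in$ for $i\in[p-1]$; the substitution $\alpha'_k:=\alpha_k-1$ identifies this set with $A(n,t,p)$ of Definition 8, giving $r(R)=|A(n,t,p)|=\begin{bmatrix}n\\t\end{bmatrix}_p$.
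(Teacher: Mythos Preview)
Your proposal is correct and follows the same route as the paper: Danilov--Stanley together with Lemma~7 gives the description of $\omega_R$, and the substitution $\alpha\mapsto\alpha-\mathbf{1}$ identifies the index set with $A(n,t,p)$. The one genuine difference is that the paper simply \emph{asserts} the equality
\[
\omega_R=\bigl(\{x^{\alpha}y \mid \alpha\in\ZZ_>^{pt+1},\ \textstyle\sum_i\alpha_i=pn+1,\ \sum_{i\le kt}\alpha_i<kn+1\ (k\in[p-1])\}\bigr)R
\]
without arguing why every interior lattice point is a multiple of one with $\beta=\mathbf{1}$, whereas you supply a full proof of this ``shape theorem'' via the pigeonhole argument on the window $I=[B_{s_0-1},\,B_{s_*}-1]$. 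Your argument is sound: the bookkeeping for $\delta_s(\gamma-g_{i_0,j_0})$ is correct, the window really does lie inside the admissible range for $(i_0,j_0)\in V(\mathcal P)$, and the inequality $\sum_{k\in I}\alpha_k\ge(s_*-s_0+1)n+1>(s_*-s_0+1)t=|I|$ (with the obvious adjustments when $s_0=1$ or $s_*=p$) forces the required $\alpha_{i_0}\ge 2$. So your write-up actually fills a gap that the paper leaves to the reader.
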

	
	\begin{proof}
	We know that $K[\mathcal{P}]$ is a normal Cohen-Macaulay domain of dimension $|V(\mathcal{P})|$ - $|\mathcal{P}|=pt+pn+1$. 
It follows that $R$ is normal and we recall that by a well known formula of Danilov-Stanley the canonical module $\omega_{R}$ of
 $R,$ with respect to standard grading, can be expressed as an ideal of $R$ generated by
 monomials, that is $\omega_{R}=(\{x^{a}| \ a\in {\mathbb{N}}B\cap \relint({\mathbb{R_{+}}}B)\}),$ 
where  \[B:=\{\log(x_{i}y_{j}) \ | \ (i,j)\in V(\mathcal{P})\}\subset \NN^{pt+pn+2}\] is the set of the exponents of the generators of the toric ring $R$.
Thus the canonical module is the ideal 
\[\omega_{R}=(\{x^{\alpha}y \ | \ \alpha\in \ZZ_>^{pt+1}, \  \sum_{i=1}^{pt+1}\alpha_{i}=pn+1, \ \sum_{i=1}^{kt}\alpha_{i} < kn+1 \ for \ all \ k \in [p-1]\})R,\]
where $x^{\alpha}=x_{1}^{\alpha_{1}}\cdot \ldots \cdot x_{pt+1}^{\alpha_{pt+1}}$ and $y=y_{1}\cdot \ldots \cdot y_{pn+1}$. \\
Then \[\omega_{R}=(\{x^{\alpha}y \ | \ \alpha - {\bf{1}} \in A(n,t,p)\})R,\] where \ ${\bf{1}}=(1,1,\ \ldots, 1) \in \ZZ_>^{pt+1}$ and so the Cohen-Macaulay type of $R$ is $r(R)=\begin{bmatrix}  n \\ t \end{bmatrix}_{p} $.
	\end{proof}

For $n, t, p \in \mathbb{N}$, $1\leq t < n$, $p\geq 1$, let $u_{1}=\cdots=u_{p}=n$ and  $r_{1}=\cdots=r_{p}=n-t$ and we consider 
$\mathcal{P}=\mathcal{P}
  \left( {\begin{array}{cccc}
   u_{1} & u_{2} & . . . & u_{p} \\
   r_{1} & r_{2} & . . . & r_{p} \\
  \end{array} } \right)$.
	
	\begin{corollary}
	Let $S=K[x_{1},\ldots, x_{p(n-t)+1}, y_{1},\ldots, y_{pn+1}]$ be a standard graded polynomial ring over a field $K$ and 
	$R=K[x_{i}y_{j} \ | \ (i,j)\in V(\mathcal{P})].$ Then the Cohen-Macaulay type of $R$ is $r(R)=\begin{bmatrix}  n \\ t \end{bmatrix}_{p} $.
	\end{corollary}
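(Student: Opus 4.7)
The plan is to derive the Corollary directly from the preceding Theorem, using only the symmetry $\begin{bmatrix} n \\ t \end{bmatrix}_p = \begin{bmatrix} n \\ n-t \end{bmatrix}_p$ established in part~1 of the earlier Proposition. All of the algebraic content (the irreducible representation of the cone, the Danilov--Stanley description of $\omega_R$, and the translation of its minimal generators into the combinatorial set $A(n,t,p)$) has already been carried out in the proof of the Theorem, so the task here is bookkeeping plus one invocation of the symmetry.

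Concretely, I would first set $t' := n - t$. The hypothesis $1 \le t < n$ is symmetric under $t \mapsto n-t$, so $1 \le t' < n$ and the Theorem applies with parameter $t'$. Next I would observe that the polyomino in the Corollary, built from $u_1 = \cdots = u_p = n$ and $r_1 = \cdots = r_p = n - t = t'$, is literally an instance of the polyomino treated in the Theorem, only with $t$ replaced by $t'$. Consequently $B_p = p t' + 1 = p(n-t) + 1$ and $A_p = pn+1$, so the ambient polynomial ring $S = K[x_1,\ldots,x_{p(n-t)+1}, y_1,\ldots,y_{pn+1}]$ in the statement matches the one to which the Theorem directly applies. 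Invoking the Theorem gives
\[
r(R) \;=\; \begin{bmatrix} n \\ t' \end{bmatrix}_p \;=\; \begin{bmatrix} n \\ n-t \end{bmatrix}_p.
\]

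Finally, I would close the argument by applying part~1 of the Proposition on generalized Fuss--Catalan numbers, which asserts $\begin{bmatrix} n \\ n-t \end{bmatrix}_p = \begin{bmatrix} n \\ t \end{bmatrix}_p$ (proved by reflecting the defining lattice paths across a line parallel to the antidiagonal). Combining the two displayed equalities yields $r(R) = \begin{bmatrix} n \\ t \end{bmatrix}_p$, as required. There is no serious obstacle: the Corollary is essentially the ``dual'' incarnation of the Theorem made visible by the polyomino symmetry illustrated in Example~6 (where $\mathcal{P}_1$ plays the role of the Theorem's shape and $\mathcal{P}_2$ the role of the Corollary's shape), and the identification of the two Cohen--Macaulay types is exactly the symmetry of $\begin{bmatrix} n \\ t \end{bmatrix}_p$.
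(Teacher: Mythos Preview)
Your argument is correct and is exactly the intended one: the paper states the Corollary without proof, as an immediate consequence of the Theorem applied with $t'=n-t$ together with the symmetry $\begin{bmatrix} n \\ t \end{bmatrix}_{p}=\begin{bmatrix} n \\ n-t \end{bmatrix}_{p}$ from part~1 of the Proposition. Your verification that $1\le t'<n$ and that the ambient ring matches is the only bookkeeping required.
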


$\bf{Acknowledgement.}$	The author would like to thank the anonymous referee for her/his valuable suggestions.

Alin \c{S}tefan\\  Department of Computer Science, Information Technology, Mathematics and Physics\\ 
Petroleum and Gas University of Ploie\c{s}ti\\ 
B-dul. Bucure\c{s}ti, No. 39, Ploie\c{s}ti 100680, Romania\\
{\tt nastefan@upg-ploiesti.ro, alin11\_alg@yahoo.com }\vspace{3mm}

\end{document}